\title[Note on residual finiteness of Artin groups]{Note on residual finiteness of Artin groups}
\author[R Blasco-Garc\'ia]{Rub\'en Blasco-Garc\'ia}
\address{Departamento de Matem\'aticas\\
Universidad de Zaragoza\\
50009 Zaragoza\\
Spain}
\email{rubenb@unizar.es}
\urladdr{}
\author[A Juh\'asz]{Arye Juh\'asz}
\address{Department of Mathematics\\
Technion, Israel Institute of Technology\\
Haifa 32000\\
Israel}
\email{arju@technion.ac.il}
\urladdr{}
\author[L Paris]{Luis Paris}
\address{IMB, UMR 5584\\
CNRS, Univ. Bourgogne Franche-Comt\'e\\
21000 Dijon\\
France}
\email{lparis@u-bourgogne.fr}
\urladdr{}
\newtheorem{thm}{Theorem}[section]
\newtheorem{lem}[thm]{Lemma}
\newtheorem{corl}[thm]{Corollary}
\theoremstyle{definition}
\newtheorem*{rem}{Remark}
\numberwithin{equation}{section}
\renewcommand{\thefigure}{\ifnum \c@section>\z@ \thesection.\fi
 \@arabic\c@figure}
\begin{document}

\def\N{\mathbb N} \def\PP{\mathcal P} \def\Z{\mathbb Z}

%%%%%%%%%%

\begin{abstract}
Let $A$ be an Artin group.
A partition $\PP$ of the set of standard generators of $A$ is called admissible if, for all $X,Y \in \PP$, $ X \neq Y$, there is at most one pair $(s,t) \in X \times Y$ which has a relation.
An admissible partition $\PP$ determines a quotient Coxeter graph $\Gamma/\PP$.
We prove that, if $\Gamma/\PP$ is either a forest or an even triangle free Coxeter graph and $A_X$ is residually finite for all $X \in \PP$, then $A$ is residually finite.
\end{abstract}

\maketitle

%%%%%%%%%%

\section{Introduction and statements}

Let $S$ be a finite set.
A \emph{Coxeter matrix} over $S$ is a square matrix $M = (m_{s,t})_{s,t \in S}$, indexed by the elements of $S$, with coefficients in $\N \cup \{ \infty\}$, such that $m_{s,s}=1$ for all $s \in S$ and $m_{s,t} = m_{t,s} \ge 2$ for all $s,t \in S$, $s \neq t$.
We represent $M$ by a labelled graph $\Gamma$, called \emph{Coxeter graph}, defined as follows. 
The set of vertices of $\Gamma$ is $S$ and two vertices $s,t$ are connected by an edge labelled with $m_{s,t}$  if $m_{s,t} \neq \infty$.

If $a,b$ are two letters and $m$ is an integer $\ge 2$, we set $\Pi(a,b:m) = (ab)^{\frac{m}{2}}$ if $m$ is even, and $\Pi(a,b:m) = (ab)^{\frac{m-1}{2}}a$ if $m$ is odd. 
In other words, $\Pi(a,b:m)$ denotes the word $aba \cdots$ of length $m$.
The \emph{Artin group} $A=A_\Gamma$ of $\Gamma$ is defined by the presentation
\[
A = \langle S \mid \Pi(s,t: m_{s,t}) = \Pi(t,s: m_{s,t}) \text{ for all } s,t \in S,\ s \neq t \text{ and } m_{s,t} \neq \infty \rangle\,.
\]

Recall that a group $G$ is \emph{residually finite} if for each $g \in G \setminus \{1\}$ there exists a group homomorphism $\varphi: G \to K$ such that $K$ is finite and $\varphi (g) \neq 1$.
Curiously the list of Artin groups known to be residually finite is quite short.
It contains the Artin groups of spherical type (because they are linear), the right-angled Artin groups (because they are residually torsion free nilpotent), the even Artin groups of FC type (see Blasco-Garcia--Martinez-Perez--Paris \cite{BlMaPa1}) and some other examples.
Our purpose here is to increase this list.

For $X \subset S$ we denote by $\Gamma_X$ the full subgraph of $\Gamma$ spanned by $X$ and by $A_X$ the subgroup of $A$ generated by $X$. 
By Van der Lek \cite{Lek1}, $A_X$ is the Artin group of $\Gamma_X$.
We say that $\Gamma$ is \emph{even} if $m_{s,t}$ is either even or $\infty$ for all $s,t \in S$, $s \neq t$.
We say that $\Gamma$ is \emph{triangle free} is $\Gamma$ has no full subgraph which is a triangle. 
Here by a \emph{partition} of $S$ we mean a set $\PP$ of pairwise disjoint subsets of $S$ satisfying $\cup_{X \in \PP} X = S$.
We say that a partition $\PP$ is \emph{admissible} if, for all $X, Y \in \PP$, $X \neq Y$, there is at most one edge in $\Gamma$ connecting an element of $X$ with an element of $Y$.
In particular, if $s \in X$ and $t \in Y$ are connected in $\Gamma$ by an edge and $s' \in X$, $s' \neq s$, then $s'$ is not connected in $\Gamma$ by an edge to any vertex of $Y$.  
An admissible partition $\PP$ of $\Gamma$ determines a new Coxeter graph $\Gamma/\PP$ defined as follows. 
The set of vertices of $\Gamma/\PP$ is $\PP$.
Two distinct elements $X,Y \in \PP$ are connected by an edge labelled with $m$ if there exist $s \in X$ and $t \in Y$ such that $m_{s,t}=m$ ($\neq \infty$).

Our main result is the following.

\begin{thm}\label{thm1_1}
Let $\Gamma$ be a Coxeter graph, let $A = A_\Gamma$, and let $\PP$ be an admissible partition of $S$ such that
\begin{itemize}
\item[(a)]
the group $A_X$ is residually finite for all $X \in \PP$,
\item[(b)]
the Coxeter graph $\Gamma/\PP$ is either even and triangle free, or a forest.
\end{itemize}
Then $A$ is residually finite.
\end{thm}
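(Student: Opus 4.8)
The plan is to prove the theorem by induction on the number $|\PP|$ of blocks, by realizing $A$ as an iterated amalgamated free product of smaller Artin groups glued along \emph{retracts}, and then invoking the theorem of Boler and Evans that an amalgamated free product of two residually finite groups over a subgroup that is a retract of each factor is again residually finite. Since a free product of residually finite groups is residually finite, I may assume $\Gamma/\PP$ is connected. Three elementary facts will drive the argument. First, for any Artin group $A_\Delta$ and any standard generator $s$, the cyclic group $\langle s\rangle$ is a retract of $A_\Delta$: the homomorphism $A_\Delta\to\Z$ sending a generator to $1$ if it is joined to $s$ by a path of odd-labelled edges and to $0$ otherwise is well defined and sends $s$ to $1$, so after identifying $\Z$ with $\langle s\rangle$ it is a retraction. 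Second, and crucially for case (b), if $W\subseteq S$ is a union of blocks and all edges of $\Gamma$ joining $W$ to $S\setminus W$ are \emph{even}, then the map $A\to A_W$ killing every generator outside $W$ is a well-defined retraction, because each such cross-relation $\Pi(a,b:m)=\Pi(b,a:m)$ with $b$ killed becomes $a^{m/2}=a^{m/2}$. Third, dihedral Artin groups and free products of residually finite groups are residually finite.

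For the forest case I strip a leaf. Assume $\Gamma/\PP$ is a tree, pick a leaf block $Y$ joined to a block $X$ by the unique edge carrying the relation between $s\in X$ and $t\in Y$, and set $\Gamma'=\Gamma\setminus Y$. Because $Y$ is a leaf and $\PP$ is admissible, the only edge of $\Gamma$ meeting $Y$ and $S\setminus Y$ is $\{s,t\}$, so $S\setminus Y$ and $Y\cup\{s\}$ are full subsets meeting only in $s$, with no edge between $(S\setminus Y)\setminus\{s\}$ and $Y$. By the standard amalgamation property of Artin groups over full subgraphs (a consequence of Van der Lek \cite{Lek1}) this gives
\[
A \;=\; A_{S\setminus Y}\;*_{\langle s\rangle}\;A_{Y\cup\{s\}}\,.
\]
Here $A_{S\setminus Y}$ carries the induced admissible partition $\PP\setminus\{Y\}$, whose quotient graph is again a forest, while $A_{Y\cup\{s\}}$ carries the two-block partition $\{Y,\{s\}\}$; both have fewer blocks, so both are residually finite by induction (the two-block instance being $A_Y*_{\langle t\rangle}A_{\{s,t\}}$ with $A_{\{s,t\}}$ dihedral). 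Since $\langle s\rangle$ is a retract of each factor, Boler--Evans yields that $A$ is residually finite. The base case $|\PP|=1$ is hypothesis~(a).

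For the even and triangle-free case there may be no leaf to strip, so I split off a star instead. Pick any block $Q$; since $\Gamma/\PP$ is triangle free, the set $N(Q)$ of neighbouring blocks is independent in $\Gamma/\PP$, so the generators of the blocks in $N(Q)$ span the parabolic subgroup $A_{N(Q)}=\ast_{V\in N(Q)}A_V$, a free product. With $\mathrm{star}(Q)=\{Q\}\cup\bigcup_{V\in N(Q)}V$ and $R=S\setminus Q$ one checks, again via Van der Lek and admissibility, that there is no edge between $Q$ and $R\setminus N(Q)$, whence
\[
A \;=\; A_{\mathrm{star}(Q)}\;*_{A_{N(Q)}}\;A_{R}\,.
\]
The edge group $A_{N(Q)}$ is a retract of each factor by the second fact above: killing $Q$ inside $A_{\mathrm{star}(Q)}$, respectively killing all blocks outside $N(Q)$ inside $A_R$, uses exactly that the inter-block labels are even and that $N(Q)$ is independent, and in both cases the image is precisely the free product $A_{N(Q)}$. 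Now $A_{\mathrm{star}(Q)}$ has quotient graph a star, hence a tree, so it is residually finite by the forest case, and $A_R$ is even, triangle free, with fewer blocks, hence residually finite by induction; Boler--Evans finishes the step (a star already being a tree, one may assume $\Gamma/\PP$ is not itself a star, so both factors are proper).

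The part demanding the most care is the verification that the edge subgroups are genuine retracts and that the two splittings are \emph{honest} amalgamated products rather than quotients carrying hidden relations. The retract property is where both hypotheses of case~(b) are indispensable: evenness of $\Gamma/\PP$ is what makes the ``kill the outside blocks'' homomorphism respect the Artin relations, and triangle-freeness is what forces $N(Q)$ to be independent, so that the separating subgroup is a clean free-product parabolic onto which both factors retract compatibly. Honesty of the decompositions rests on admissibility, which guarantees that distinct blocks meet along at most one edge and hence that the overlap of the two sides is exactly the claimed full-subgraph parabolic; this is precisely the input needed to apply the Van der Lek amalgamation property. Assembling these splittings into a single well-founded induction on $|\PP|$ (with the forest case proved first and used as a black box inside case~(b)) then completes the argument.
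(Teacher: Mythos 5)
Your argument is correct and rests on the same two external inputs as the paper --- the Boler--Evans theorem on amalgams over retracts and Van der Lek's theorem on parabolic subgroups --- but the decompositions you feed into Boler--Evans are genuinely different. For the forest case the paper deliberately avoids leaves: it picks a vertex $X$ of $\Gamma/\mathcal{P}$ of valence at least $2$ and an adjacent block $Y$ with connecting edge $\{s,t\}$, and uses an auxiliary lemma (Lemma~\ref{lem2_3}) writing $A$ as an iterated amalgam over $\langle s\rangle$ of the groups $A_{Z_i\cup\{s\}}$ indexed by the connected components of $\Gamma_{S\setminus\{s\}}$, each of which sits inside one of two inductively residually finite parabolics; your leaf-stripping $A=A_{S\setminus Y}*_{\langle s\rangle}A_{Y\cup\{s\}}$ reaches the same conclusion more directly and absorbs the paper's separate two-block lemma (Lemma~\ref{lem2_4}), provided one keeps, as you do, the explicit dihedral splitting $A_{Y\cup\{s\}}=A_Y*_{\langle t\rangle}A_{\{s,t\}}$ for the instance the induction does not cover. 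For the even triangle-free case the paper chooses two non-adjacent blocks $X,Y$ and splits $A=A_{S\setminus X}*_{A_{S\setminus(X\cup Y)}}A_{S\setminus Y}$ with both factors handled by the same induction, whereas you split off the closed star of one block over the free product of its neighbouring blocks and hand the star factor to the forest case; in both treatments evenness of the inter-block labels is exactly what makes the ``kill a union of blocks'' retraction well defined, while triangle-freeness enters for the paper as the existence of a non-adjacent pair and for you as the independence of $N(Q)$. Your version buys a more self-contained forest argument and a clean star--tree picture of the even case; the paper's version buys an even case that never needs the forest case as a subroutine. (A cosmetic remark: your odd-path retraction onto $\langle s\rangle$ is correct but can be replaced by the paper's simpler retraction sending every generator to $s$.)
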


\begin{corl}\label{corl1_2}
\begin{itemize}
\item[(1)]
If $\Gamma$ is even and triangle free, then $A$ is residually finite.
\item[(2)]
If $\Gamma$ is a forest, then $A$ is residually finite.
\end{itemize}
\end{corl}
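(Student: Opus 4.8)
The plan is to realize $A$ as the fundamental group of a graph of groups built from the admissible partition $\PP$, and then to show that residual finiteness is inherited from the vertex groups through the amalgamations and HNN extensions that occur. First I would set up the decomposition. Because $\PP$ is admissible, any two distinct blocks $X,Y \in \PP$ are joined in $\Gamma$ by at most one edge, realized by $s \in X$ and $t \in Y$ with $m_{s,t}=m$, so the only defining relations of $A$ mixing generators of different blocks are the single dihedral relations $\Pi(s,t:m)=\Pi(t,s:m)$. I would exploit this to write, for each edge $\{X,Y\}$ of $\Gamma/\PP$, the parabolic subgroup $A_{X\cup Y}$ as the amalgam
\[
A_{X\cup Y} = A_X *_{\langle s\rangle} A_{\{s,t\}} *_{\langle t\rangle} A_Y\,,
\]
where $A_{\{s,t\}}$ is the dihedral Artin group of the edge and $\langle s\rangle \cong \langle t\rangle \cong \Z$. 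Iterating over all edges realizes $A$ as the fundamental group of a graph of groups $\mathcal G$ whose underlying graph is the barycentric subdivision of $\Gamma/\PP$: the vertex groups are the blocks $A_X$ (residually finite by hypothesis (a)) and the dihedral groups $A_{\{s,t\}}$ (residually finite, being of spherical type hence linear), while every edge group is infinite cyclic, generated by a single standard generator. Verifying that this decomposition is correct is where triangle-freeness enters: if three blocks were pairwise joined, the corresponding parabolic subgroup would be a genuinely two-dimensional Artin group on a triangle, which does not split as such an amalgam.

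With the decomposition in hand I would argue by induction on the number of edges of $\Gamma/\PP$. When $\Gamma/\PP$ is a forest its subdivision is again a forest, so $\mathcal G$ is a tree of groups and $A$ is an iterated amalgamated product with no stable letters; taking free products over the connected components is harmless for residual finiteness by Baumslag's theorem, and the problem reduces to the single amalgamation step $G_1 *_{\langle s\rangle} G_2$ with $G_1,G_2$ residually finite. When $\Gamma/\PP$ is even and triangle free but has cycles, the fundamental group of $\mathcal G$ additionally involves HNN extensions along the cyclic edge groups, which must be controlled as well.

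The heart of the matter, and the step I expect to be the main obstacle, is that an amalgam or HNN extension of residually finite groups over an infinite cyclic subgroup is not automatically residually finite: one needs the profinite topologies induced on the edge group $\langle s\rangle$ from the two sides to be compatible, so that finite quotients of the vertex groups can be glued along a common finite quotient of $\langle s\rangle$. Here the hypotheses do the real work. The length homomorphism $A_X \to \Z$ sending every standard generator to $1$ restricts to an isomorphism on $\langle s\rangle$, exhibiting each edge group as a retract of its adjacent vertex groups, and I would use such retractions to produce the compatible systems of finite-index subgroups needed to separate a given nontrivial element along its graph-of-groups normal form. Evenness is what keeps these retractions aligned across the conjugations introduced by stable letters in the cyclic case, so that the two edge-group topologies remain compatible through an HNN step; in the forest case there are no stable letters, compatibility is immediate from the retractions alone, and this is precisely why odd labels can be tolerated there. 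Assembling these local separations along the Bass--Serre tree of $\mathcal G$ would then produce the desired finite quotient of $A$ in which the given element survives.
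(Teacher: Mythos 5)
Your splitting $A_{X\cup Y} = A_X *_{\langle s\rangle} A_{\{s,t\}} *_{\langle t\rangle} A_Y$ is correct, and in the forest case your argument is essentially sound and close in spirit to the paper's: the barycentric subdivision is then a tree, $A$ really is an iterated amalgam over infinite cyclic edge groups, each edge group $\langle s\rangle$ is a retract of both adjacent factors (the homomorphism sending \emph{every} standard generator to $s$ is well defined on any Artin group, whatever the parity of the labels, which is why odd labels are harmless here), and an amalgam of two residually finite groups over a common retract is residually finite --- this is exactly the Boler--Evans theorem that the paper uses as its engine. Free products over the connected components are likewise unproblematic.

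The gap is in part (1), as soon as $\Gamma$ contains a cycle (necessarily of length $\ge 4$). Your claim that $A$ is the fundamental group of the graph of groups over the barycentric subdivision is false there: when the underlying graph has nonzero first Betti number, the fundamental group of the graph of groups contains a stable letter for each edge outside a spanning tree, whereas the Artin presentation exhibits $A$ as the \emph{colimit} of the diagram of vertex and edge groups, i.e.\ as the quotient of that fundamental group obtained by killing the stable letters. For a square with even labels, for instance, the two groups are distinguished already by their abelianizations ($\Z^5$ versus $\Z^4$). So no HNN extension ever occurs inside $A$, and the plan of ``controlling the HNN steps using evenness'' is aimed at the wrong group; the genuine difficulty is that closing up a cycle identifies two copies of a cyclic subgroup inside an already amalgamated group, which is not an amalgamated product over a cyclic retract. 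Note also that triangle-freeness is not what validates your decomposition --- tree-ness is; a square is triangle free and the decomposition still fails. The paper handles the even triangle-free case by a different splitting altogether: triangle-freeness (with at least three vertices) provides two non-adjacent generators $X,Y$, whence $A = A_{S\setminus X} *_{A_{S\setminus(X\cup Y)}} A_{S\setminus Y}$, an amalgam over a \emph{large} parabolic subgroup; evenness is what makes the maps killing the generators of $Y$ (resp.\ $X$) well-defined retractions onto $A_{S\setminus(X\cup Y)}$, so that Boler--Evans applies and an induction on the number of vertices closes the argument. You would need to replace your cyclic-edge-group scheme by something of this kind to recover part (1).
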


\begin{rem}
Recall that the \emph{Coxeter group} $W$ of $\Gamma$ is the quotient of $A$ by the relations $s^2=1$, $s \in S$.
We say that $\Gamma$ is of \emph{spherical type} if $W$ is finite.
We say that a subset $X$ of $S$ is \emph{free of infinity} if $ m_{s,t} \neq \infty$ for all $s,t \in X$.
We say that $\Gamma$ is of \emph{FC type} if for each free of infinity subset $X$ of $S$ the Coxeter graph $\Gamma_X$ is of spherical type.
Artin groups of FC type were introduced by Charney--Davis \cite{ChaDav1} in their study of the $K (\pi,1)$ problem for Artin groups and there is an extensive literature on them.
It is easily checked that any triangle free Coxeter graph (in particular any forest) is of FC type.
So, all the groups that appear in Corollary \ref{corl1_2} are of FC type.
On the other hand, we know by Blasco-Garcia--Martinez-Perez--Paris \cite{BlMaPa1} that all even Artin groups of FC type are residually finite (which gives an alternative proof to Corollary \ref{corl1_2}\,(1)).
The next challenge would be to prove that all Artin groups of FC type are residually finite.
Another interesting challenge would be to prove that all three generators Artin groups are residually finite.
\end{rem}

{\bf Acknowledgement}
The first named author was partially supported by Gobierno de Arag\'on, European Regional Development Funds, MTM2015-67781-P (MINECO/ FEDER) and by the Departamento de Industria e Innovaci\'on del Gobierno de Arag\'on and Fondo Social Europeo Phd grant.

%%%%%%%%%%

\section{Proof of Theorem \ref{thm1_1}}

The proof of Theorem \ref{thm1_1} is based on the following. 

\begin{thm}[Boler--Evans \cite{BolEva1}]\label{thm2_1}
Let $G_1$ and $G_2$ be two residually finite groups and let $L$ be a common subgroup of $G_1$ and $G_2$.
Assume that both $G_1$ and $G_2$ split as semi-direct products $G_1 = H_1 \rtimes L$ and $G_2 = H_2 \rtimes L$.
Then $G= G_1 \ast_L G_2$ is residually finite.
\end{thm}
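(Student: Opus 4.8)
The plan is to exploit the two retractions supplied by the semidirect product decompositions and reduce everything to an amalgam of \emph{finite} groups. Write $\rho_1 \colon G_1 \to L$ and $\rho_2 \colon G_2 \to L$ for the projections with kernels $H_1$ and $H_2$; both restrict to the identity on $L$. By the universal property of the amalgamated product they glue to a retraction $\rho \colon G \to L$ with $\rho|_{G_i} = \rho_i$, so that $G = N \rtimes L$ with $N = \ker \rho$. Given $1 \neq g \in G$, I would split into two cases according to $\rho(g)$. If $\rho(g) \neq 1$, then since $L$ is residually finite (a subgroup of $G_1$) there is a finite quotient $L \to \bar L$ in which $\rho(g)$ survives, and $G \xrightarrow{\ \rho\ } L \to \bar L$ detects $g$. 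The whole difficulty is the case $g \in N \setminus \{1\}$, and here one should note $g \notin L$ since $N \cap L = 1$.

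For $g \in N$, my strategy is to manufacture a pair of finite quotients $q_i \colon G_i \to \bar G_i$ that agree on $L$, that is, with $\ker q_1 \cap L = \ker q_2 \cap L =: V$ and a common identification $L/V \hookrightarrow \bar G_i$. Such a pair induces a homomorphism
\[
\Phi \colon G = G_1 \ast_L G_2 \longrightarrow \bar G_1 \ast_{\bar L} \bar G_2 =: \bar G ,
\]
where $\bar L = L/V$ and $\bar G$ is an amalgam of \emph{finite} groups over a common finite subgroup. Amalgams of finite groups are residually finite (they are virtually free; this is also the special case of G.\ Baumslag's theorem on amalgams over a finite subgroup), so once I know $\Phi(g) \neq 1$ I can postcompose with a finite quotient of $\bar G$ and finish. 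Thus the task reduces to (a) building compatible $q_i$ and (b) guaranteeing $\Phi(g) \neq 1$.

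The key point is that the retraction makes $L$ separable in each $G_i$, and this is exactly what keeps the syllables of $g$ out of $\bar L$. Concretely, for $a \in G_i \setminus L$ put $h = a\,\rho_i(a)^{-1} \in H_i \setminus \{1\}$; choosing by residual finiteness a finite quotient $\pi \colon G_i \to Q$ with $\pi(h) \neq 1$ and forming $\Psi = (\pi,\, \pi \circ \rho_i) \colon G_i \to Q \times \pi(L)$, one checks that $\Psi(a) \notin \Psi(L)$, so $a$ is separated from the coset space of $L$. Writing $g$ in reduced form $g = a_1 a_2 \cdots a_n$ with syllables alternating in $G_1 \setminus L$ and $G_2 \setminus L$ (here $n \geq 1$ since $g \notin L$), there are only finitely many syllables, so I can choose finite-index normal subgroups $K_1', K_2'$ of $G_1, G_2$ separating every syllable from $L$. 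To make the two quotients agree on $L$ I then set $V = (K_1' \cap L) \cap (K_2' \cap L)$ and replace $K_i'$ by $K_i = K_i' \cap \rho_i^{-1}(V)$; since $\rho_i|_L = \mathrm{id}$ one gets $K_i \cap L = V$ on both sides, while $K_i \subseteq K_i'$ preserves all the separations. With $\bar G_i = G_i/K_i$ the resulting $\Phi$ sends $g$ to a reduced word of length $n \geq 1$ in $\bar G$, which is nontrivial by the normal form theorem for amalgamated products; hence $\Phi(g) \neq 1$.

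The main obstacle is precisely step (a): for a general amalgam the finite quotients of the two factors cannot be synchronised over $L$, and indeed amalgamated products of residually finite groups need not be residually finite. The semidirect product hypothesis removes this obstruction, since the retractions $\rho_i$ simultaneously provide (i) the separability of $L$ in each $G_i$ used to keep syllables outside $\bar L$, and (ii) the correction replacing $K_i'$ by $K_i' \cap \rho_i^{-1}(V)$, which forces $\ker q_1 \cap L = \ker q_2 \cap L$ without destroying the separation of syllables. Everything else is the standard reduction to amalgams of finite groups together with the normal form theorem.
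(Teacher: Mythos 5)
The paper does not actually prove this statement: it is quoted as a black box with a citation to Boler--Evans \cite{BolEva1}, so there is no internal proof to compare against. Your argument is a correct, self-contained proof, and it follows the classical line for this result: use the retractions $\rho_i$ (whose existence is exactly the semidirect product hypothesis) to separate each syllable of a reduced word from $L$ via the map $(\pi,\pi\circ\rho_i)$, then synchronise the two finite quotients over $L$ by passing to $K_i = K_i' \cap \rho_i^{-1}(V)$, and finish by mapping onto an amalgam of finite groups, which is virtually free, hence residually finite. The two delicate points both check out: $h = a\rho_i(a)^{-1}$ is indeed a nontrivial element of $H_i$ precisely when $a \notin L$, and since $\rho_i$ restricts to the identity on $L$ and $V \subseteq K_i' \cap L$ is normal in $L$, one gets $K_i \cap L = K_i' \cap V = V$ with $K_i$ normal of finite index, while $K_i \subseteq K_i'$ keeps every syllable outside $LK_i$, so $\Phi(g)$ is a reduced word of length $n \ge 1$ and the normal form theorem applies (including the case $n=1$, where $\Phi(g) \notin \bar L$). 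Your closing remark correctly isolates where residual finiteness of amalgams fails in general and why the retractions remove the obstruction; this is faithful to the substance of the Boler--Evans theorem even if their published write-up packages the argument differently.
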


The rest of the section forms the proof of Theorem \ref{thm1_1}.

\begin{lem}\label{lem2_2}
If a Coxeter graph $\Gamma$ has one or two vertices, then $A=A_\Gamma$ is residually finite.
\end{lem}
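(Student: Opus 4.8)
The plan is to reduce the statement to a short case analysis, splitting first on the number of vertices of $\Gamma$ and then, in the two-vertex case, on the label of the edge joining them.

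First I would treat the one-vertex case. If $S = \{s\}$, then the defining presentation of $A$ involves no relations, so $A = \langle s \rangle$ is infinite cyclic. Since $\Z$ is residually finite, this case is immediate.

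Next I would treat two vertices $S = \{s,t\}$ and split into two subcases according to $m_{s,t}$. If $m_{s,t} = \infty$, then there is no defining relation and $A = \langle s,t \rangle$ is the free group of rank two, which is residually finite. If $m_{s,t} = m < \infty$, then $\Gamma$ is the Coxeter graph of type $I_2(m)$, whose Coxeter group is the dihedral group of order $2m$; in particular this group is finite, so $\Gamma$ is of spherical type. As recalled in the introduction, Artin groups of spherical type are linear, and finitely generated linear groups over a field of characteristic zero are residually finite by Malcev's theorem. Hence $A$ is residually finite, which exhausts all cases.

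I do not expect any genuine obstacle in this lemma: apart from the classical residual finiteness of $\Z$ and of free groups, the only input is the residual finiteness of spherical type Artin groups, which is already invoked in the introduction and follows from their linearity. The slightly delicate point, if one wanted a self-contained argument, is the finite case $m < \infty$: one must recognise the two-generator spherical type Artin group as a linear group rather than argue directly from its presentation, since a naive attempt via the infinite cyclic center and its (virtually free) quotient would still need to rule out the failure of residual finiteness that can occur for central extensions.
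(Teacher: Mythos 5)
Your proof is correct and follows essentially the same route as the paper: one vertex gives $\Z$, two vertices with $m_{s,t}=\infty$ give a free group of rank two, and two vertices with $m_{s,t}<\infty$ give a spherical type Artin group, which is linear (Digne, Cohen--Wales) and hence residually finite. Your explicit mention of Malcev's theorem just spells out the step the paper leaves implicit.
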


\begin{proof}
If $\Gamma$ has only one vertex, then $A \simeq \Z$ which is residually finite.
Suppose that $\Gamma$ has two vertices $s,t$.
If $m_{s,t} = \infty$, then $A$ is a free group of rank $2$ which is residually finite.
If $m_{s,t} \neq \infty$, then $\Gamma$ is of spherical type, hence, by Digne \cite{Digne1} and Cohen--Wales \cite{CohWal1}, $A$ is linear, and therefore $A$ is residually finite.
\end{proof}

\begin{lem}\label{lem2_3}
Let $\Gamma$ be a Coxeter graph and let $A = A_\Gamma$.
Let $s \in S$. 
We set $Y= S \setminus \{s\}$, we denote by $\Gamma_1, \dots, \Gamma_\ell$ the connected components of $\Gamma_Y$ and, for $i \in \{1, \dots, \ell\}$, we denote by $Y_i$ the set of vertices of $\Gamma_i$.
If $A_{Y_i \cup \{ s\}}$ is residually finite for all $i \in \{1, \dots, \ell\}$, then $A$ is residually finite.
\end{lem}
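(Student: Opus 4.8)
The plan is to realize $A$ as an iterated amalgamated product of the subgroups $A_{Y_i \cup \{s\}}$ over the common cyclic subgroup $\langle s \rangle$, and then to feed this decomposition into the Boler--Evans criterion (Theorem \ref{thm2_1}) by an induction on $\ell$.

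First I would establish the amalgamation. Since $Y_1, \dots, Y_\ell$ are the connected components of $\Gamma_Y$, no edge of $\Gamma$ joins a vertex of $Y_i$ to a vertex of $Y_j$ for $i \neq j$; hence every edge of $\Gamma$ lies within some $\Gamma_{Y_i \cup \{s\}}$, being either an edge internal to a $Y_i$ or an edge joining $s$ to a vertex of some $Y_i$. Writing $Z = Y_2 \cup \cdots \cup Y_\ell$, the standard presentation of $A$ is thus the union of the presentations of $A_{Y_1 \cup \{s\}}$ and $A_{Z \cup \{s\}}$ with their two copies of the generator $s$ identified. By Van der Lek \cite{Lek1} these parabolic subgroups are the Artin groups of the corresponding full subgraphs and $A_{\{s\}} = \langle s \rangle \cong \Z$ injects into each, so this is a genuine amalgam:
\[
A = A_{Y_1 \cup \{s\}} \ast_{\langle s \rangle} A_{Z \cup \{s\}} \,.
\]

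The key point is then to produce the semidirect product splittings required by Theorem \ref{thm2_1}, with common subgroup $L = \langle s \rangle$. I claim that for \emph{any} Artin group $A_\Delta$ and any standard generator $u$ there is a retraction $A_\Delta \to \langle u \rangle$. The naive map sending $u \mapsto u$ and every other generator to $1$ fails, because along an odd-labelled edge $\{a,b\}$ the relation $\Pi(a,b:m)=\Pi(b,a:m)$ forces $a$ and $b$ to have equal image in any abelian quotient. The fix is to let $C$ be the connected component of $u$ in the subgraph of $\Delta$ consisting of the odd-labelled edges, and to define $\phi(v) = u$ for $v \in C$ and $\phi(v) = 1$ otherwise. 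A direct check on the two types of braid relations shows that $\phi$ is well defined: even edges impose no constraint, since both sides of the relation contain the two generators the same number of times, while an odd edge joins two vertices lying in the same odd component, hence with equal image. As $\phi(u) = u$, the map $\phi$ is a retraction and $A_\Delta = \ker\phi \rtimes \langle u \rangle$. Applying this with $u = s$ to $A_{Y_1 \cup \{s\}}$ and to $A_{Z \cup \{s\}}$ yields the two required semidirect product decompositions over $L = \langle s \rangle$.

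Finally I would run the induction on $\ell$. For $\ell = 1$ the group $A = A_{Y_1 \cup \{s\}}$ is residually finite by hypothesis. For the inductive step, $A_{Y_1 \cup \{s\}}$ is residually finite by hypothesis, while $A_{Z \cup \{s\}}$ is residually finite by the induction hypothesis applied to the vertex $s$ of $\Gamma_{Z \cup \{s\}}$, whose components after deleting $s$ are exactly $\Gamma_2, \dots, \Gamma_\ell$. With the two splittings over $\langle s \rangle$ in hand, Theorem \ref{thm2_1} shows that $A = A_{Y_1 \cup \{s\}} \ast_{\langle s \rangle} A_{Z \cup \{s\}}$ is residually finite. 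I expect the main obstacle to be the construction of the splittings: one has to notice that retracting onto $\langle s \rangle$ forces the entire odd component of $s$ to be sent to $s$ and everything else to collapse. Once this is seen, the remainder is a routine comparison of presentations together with the Boler--Evans theorem.
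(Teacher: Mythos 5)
Your proof is correct and follows essentially the same route as the paper: the same amalgamated product decomposition over $L=\langle s\rangle$, the same induction on $\ell$, and the same appeal to the Boler--Evans theorem. The only difference is the retraction onto $\langle s\rangle$: the paper simply sends \emph{every} generator of the factor to $s$ (which preserves all Artin relations, odd or even, since both sides of $\Pi(a,b:m)=\Pi(b,a:m)$ become $s^m$), so your more delicate odd-component construction, while valid, is not needed.
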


\begin{proof}
We argue by induction on $\ell$.
If $\ell = 1$, then $Y_1 \cup \{s\} = S$ and $A_{Y_1 \cup \{s\}} = A$, hence $A$ is obviously residually finite. 
Suppose that $\ell \ge 2$ plus the inductive hypothesis. 
We set $X_1 = Y_1 \cup \cdots \cup Y_{\ell-1} \cup \{s\}$, $X_2 = Y_\ell \cup \{s\}$ and $X_0 = \{s\}$.
Let $G_1 = A_{X_1}$, $G_2 = A_{X_2}$, and $L= A_{X_0} \simeq \Z$.
The group $G_1$ is residually finite by induction and $G_2$ is residually finite by the starting hypothesis. 
It is easily seen in the presentation of $A$ that $A = G_1 \ast_L G_2$.
Furthermore, the homomorphism $\rho_1 : G_1 \to L$ which sends $t$ to $s$ for all $t \in X_1$ is a retraction of the inclusion map $L \hookrightarrow G_1$, hence $G_1$ splits as a semi-direct product $G_1 = H_1 \rtimes L$.
Similarly, $G_2$ splits as a semi-direct product $G_2 = H_2 \rtimes L$.
We conclude by Theorem \ref{thm2_1} that $A$ is residually finite. 
\end{proof}

\begin{lem}\label{lem2_4}
Let $\Gamma$ be a Coxeter graph, let $A = A_\Gamma$, and let $\PP$ be an admissible partition of $S$ such that
\begin{itemize}
\item[(a)]
the group $A_X$ is residually finite for all $X \in \PP$,
\item[(b)]
the Coxeter graph $\Gamma/\PP$ has at most two vertices.
\end{itemize}
Then $A$ is residually finite.
\end{lem}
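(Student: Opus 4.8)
The plan is a short case analysis on the number of vertices of $\Gamma/\PP$, reducing everything to Theorem \ref{thm2_1}. A gadget I will use repeatedly is the following: for any $Z \subseteq S$ and any $u \in Z$, the map $\rho_u \colon A_Z \to \langle u \rangle$ sending every standard generator of $A_Z$ to $u$ is a well-defined retraction onto $\langle u\rangle \cong \Z$, since every defining relation $\Pi(a,b:m_{a,b}) = \Pi(b,a:m_{a,b})$ is carried to the genuine identity $u^{m_{a,b}} = u^{m_{a,b}}$. Consequently $A_Z$ splits as a semidirect product $A_Z = \ker\rho_u \rtimes \langle u\rangle$, which is exactly the kind of splitting demanded by Theorem \ref{thm2_1}. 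Note that this retraction works regardless of the parities of the labels, precisely because both generators are sent to the \emph{same} element, as in the proof of Lemma \ref{lem2_3}.

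If $\Gamma/\PP$ has a single vertex, then $\PP = \{S\}$ and $A = A_S$ is residually finite by hypothesis (a). So assume $\PP = \{X,Y\}$. By admissibility there is at most one edge of $\Gamma$ joining a vertex of $X$ to a vertex of $Y$. If there is none, then reading off the presentation gives $A = A_X \ast A_Y$, a free product of residually finite groups, which is residually finite (this is classical, and is in any case the case $L = \{1\}$ of Theorem \ref{thm2_1}). So assume there is exactly one such edge, joining $s \in X$ to $t \in Y$.

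In this last case I would produce two nested amalgamated decompositions, each visible directly in the presentation of $A$ (as in Lemma \ref{lem2_3}) and genuine as amalgams by Van der Lek's subgroup theorem \cite{Lek1}: first $A_{X \cup \{t\}} = A_X \ast_{\langle s\rangle} A_{\{s,t\}}$, and then $A = A_{X \cup \{t\}} \ast_{\langle t\rangle} A_Y$. Both decompositions are legitimate precisely because the unique cross edge $(s,t)$ lies inside the smaller factor each time, so that no defining relation of $A$ connects the remaining generators of the two factors. I first apply Theorem \ref{thm2_1} to $A_{X \cup \{t\}} = A_X \ast_{\langle s\rangle} A_{\{s,t\}}$ with $L = \langle s\rangle$: the factor $A_X$ is residually finite by (a), the dihedral Artin group $A_{\{s,t\}}$ is residually finite by Lemma \ref{lem2_2}, and both split over $\langle s\rangle$ via the retraction gadget; hence $A_{X \cup \{t\}}$ is residually finite. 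I then apply Theorem \ref{thm2_1} a second time to $A = A_{X \cup \{t\}} \ast_{\langle t\rangle} A_Y$ with $L = \langle t\rangle$: the factor $A_Y$ is residually finite by (a), the factor $A_{X \cup \{t\}}$ is residually finite by the previous step, and both split over $\langle t\rangle$ via the retraction gadget. This yields that $A$ is residually finite.

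I expect the main point requiring care to be the justification that the two decompositions really are amalgamated products, i.e. that the canonical maps of the factors into $A$ (respectively into $A_{X\cup\{t\}}$) are injective with intersection equal to the amalgamated copy of $\Z$. This is exactly where admissibility enters — it guarantees the single cross edge, so that the presentation splits cleanly — together with Van der Lek's theorem identifying each $A_Z$ with the parabolic subgroup it generates. Everything else is the routine bookkeeping of the retraction gadget.
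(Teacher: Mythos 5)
Your proof is correct, but it follows a genuinely different route from the paper's. The paper proves Lemma \ref{lem2_4} by reducing to Lemma \ref{lem2_3}: it splits into two sub-cases according to whether one of $X,Y$ is a singleton, decomposes $\Gamma_{S\setminus\{s\}}$ into connected components, observes that each $A_{Z_i\cup\{s\}}$ either lies inside $A_X$ (hence is residually finite as a subgroup of a residually finite group) or is handled by the singleton sub-case, and then lets Lemma \ref{lem2_3} do the gluing. You instead bypass Lemma \ref{lem2_3} and the connected-component bookkeeping entirely, exhibiting $A$ as two nested amalgams $A_{X\cup\{t\}} = A_X \ast_{\langle s\rangle} A_{\{s,t\}}$ and $A = A_{X\cup\{t\}} \ast_{\langle t\rangle} A_Y$, and applying Theorem \ref{thm2_1} twice with the ``send every generator to $u$'' retraction (the same retraction the paper uses inside Lemma \ref{lem2_3}, and for the same reason it is parity-independent). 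Your decompositions are legitimate: admissibility guarantees the unique cross edge $(s,t)$, so the presentation of $A$ is visibly the pushout of the two factor presentations over the shared infinite cyclic subgroup, and that subgroup is indeed infinite cyclic in each factor (your own retraction already certifies this, so the appeal to Van der Lek, while harmless, is not even strictly needed). What your approach buys is uniformity — the singleton and non-singleton cases are treated identically, and no induction on components is required; what the paper's approach buys is reusability, since Lemma \ref{lem2_3} is needed again in the proof of Lemma \ref{lem2_6}, so the authors get Lemma \ref{lem2_4} nearly for free once that machinery is in place.
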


\begin{proof}
If $|\PP| = 1$ there is nothing to prove. 
Suppose that $|\PP|=2$ and one of the elements of $\PP$ is a singleton.
We set $\PP = \{ X,Y\}$ where $X=S \setminus \{t\}$ and $Y = \{t\}$ for some $t \in S$.
If there is no edge in $\Gamma$ connecting $t$ to an element of $X$, then $A = A_X * A_Y$, hence $A$ is residually finite.
So, we can assume that there is an edge connecting $t$ to an element $s \in X$.
Note that this edge should be unique. 
We denote by $\Gamma_1, \dots, \Gamma_\ell$ the connected components of $\Gamma_{X \setminus \{s\}}$ and, for $i \in \{1, \dots, \ell\}$, we denote by $X_i$ the set of vertices of $\Gamma_i$.
For all $i \in \{1, \dots, \ell\}$ the group $A_{X_i \cup \{s\}}$ is residually finite since $A_{X_i \cup \{s\}} \subset A_X$.
On the other hand, $A_{\{s,t\}}$ is residually finite by Lemma \ref{lem2_2}.
It follows by Lemma \ref{lem2_3} that $A$ is residually finite.

Now assume that $|\PP|=2$ and both elements of $\PP$ are of cardinality $\ge 2$.
Set $\PP = \{ X,Y\}$. 
If there is no edge in $\Gamma$ connecting an element of $X$ with an element of $Y$, then $A = A_X * A_Y$, hence $A$ is residually finite. 
So, we can assume that there is an edge connecting an element $s \in X$ to an element $t \in Y$. 
Again, this edge is unique.
Let $\Omega_1, \dots, \Omega_p$ be the connected components of $\Gamma_{X \setminus \{s\}}$ and let $\Gamma_1, \dots, \Gamma_q$ be the connected components of $\Gamma_Y$.
We denote by $X_i$ the set of vertices of $\Omega_i$ for all $i \in \{1, \dots, p\}$ and by $Y_j$ the set of vertices of $\Gamma_j$ for all $j \in \{1, \dots, q\}$.
The group $A_{X_i \cup \{s\}}$ is residually finite since $X_i \cup \{s \} \subset X$ for all $i \in \{1, \dots, p\}$, and, by the above, the group $A_{Y_j \cup \{s\}}$ is residually finite for all $j \in \{1, \dots, q\}$.
It follows by Lemma \ref{lem2_3} that $A$ is residually finite.
\end{proof}

\begin{rem}
Alternative arguments from Pride \cite{Pride1} and/or from Burillo--Martino \cite{BurMar1} can be used to prove partially or completely Lemma \ref{lem2_4}.
\end{rem}

\begin{lem}\label{lem2_5}
Let $\Gamma$ be a Coxeter graph, let $A = A_\Gamma$, and let $\PP$ be an admissible partition of $S$ such that
\begin{itemize}
\item[(a)]
the group $A_X$ is residually finite for all $X \in \PP$,
\item[(b)]
the Coxeter graph $\Gamma/\PP$ is even and triangle free.
\end{itemize}
Then $A$ is residually finite.
\end{lem}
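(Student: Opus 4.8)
The plan is to argue by induction on $|\PP|$, the number of blocks. When $|\PP| \le 2$ the statement is exactly Lemma \ref{lem2_4}, which serves as the base of the induction. For the inductive step, with $|\PP| = n \ge 3$, the idea is to peel off a single block of $\PP$ by writing $A$ as an amalgamated product over the parabolic subgroup generated by the \emph{neighbourhood} of that block, and then to invoke the Boler--Evans criterion (Theorem \ref{thm2_1}). The hypotheses feed in through two distinct channels: evenness makes the required retractions well-defined, while triangle-freeness is what lets the inductive hypothesis pass to the two factors.

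First I would choose the block to peel off. Since $n \ge 3$ and $\Gamma/\PP$ is triangle free, $\Gamma/\PP$ cannot be a complete graph (a complete graph on $\ge 3$ vertices contains a triangle), so some pair of vertices is non-adjacent; fix a block $X_0 \in \PP$ admitting at least one non-neighbour in $\Gamma/\PP$. Let $N \subset S$ be the union of the blocks adjacent to $X_0$ in $\Gamma/\PP$, and set $T = X_0 \cup N$, $T' = S \setminus X_0$, so that $T \cap T' = N$. Put $G_1 = A_T$, $G_2 = A_{T'}$ and $L = A_N$. Because $X_0$ has no edge in $\Gamma$ to any block outside $N$, there are no relations between the generators in $T \setminus T' = X_0$ and those in $T' \setminus T$; reading this off the presentation of $A$ exactly as in Lemmas \ref{lem2_3} and \ref{lem2_4} gives the decomposition $A = G_1 \ast_L G_2$.

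Next I would produce the semidirect product structure required by Theorem \ref{thm2_1}. Define $\rho_1 : G_1 \to L$ by sending each generator of $X_0$ to $1$ and fixing each generator of $N$, and define $\rho_2 : G_2 \to L$ by sending each generator of $T' \setminus N$ to $1$ and fixing each generator of $N$. This is exactly where evenness of $\Gamma/\PP$ enters: every edge of $\Gamma$ joining two distinct blocks carries an even label $m$, so a relation $(st)^{m/2} = (ts)^{m/2}$ in which one of $s,t$ is sent to $1$ collapses to $s^{m/2} = s^{m/2}$ (or $t^{m/2}=t^{m/2}$) and is preserved; meanwhile a relation internal to a block is either preserved (for a block mapped identically into $L$) or trivialised (for a block mapped to $1$), regardless of the parity of its label. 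Hence $\rho_1$ and $\rho_2$ are well-defined retractions onto $L$, and $G_1 = H_1 \rtimes L$, $G_2 = H_2 \rtimes L$ with $H_i = \ker \rho_i$.

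Finally, the quotient graphs of $G_1$ and $G_2$ are the full subgraphs of $\Gamma/\PP$ spanned by $\{X_0\} \cup N$ and by $\PP \setminus \{X_0\}$; as induced subgraphs of an even triangle free graph they are again even and triangle free, and condition (a) is inherited. Since $X_0$ has a non-neighbour, both $T$ and $T'$ involve strictly fewer than $n$ blocks, so $G_1$ and $G_2$ are residually finite by the induction hypothesis, and Theorem \ref{thm2_1} yields that $A = G_1 \ast_L G_2$ is residually finite. I expect the main obstacle to be precisely the reason Lemma \ref{lem2_3} does not suffice on its own: $\Gamma/\PP$ may be $2$-connected (for instance an even cycle of length $\ge 4$), so one cannot split $A$ across a single cut generator. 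Amalgamating instead over the whole neighbourhood subgroup $A_N$ is what makes the induction close up, with triangle-freeness guaranteeing that the subgraph spanned by $\{X_0\}\cup N$ is a star, hence triangle free, so that the inductive hypothesis genuinely applies to $G_1$.
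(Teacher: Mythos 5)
Your proof is correct, and it shares the paper's skeleton --- induction on $|\PP|$ with Lemma \ref{lem2_4} as base case, a splitting of $A$ as an amalgam of proper parabolic subgroups, retractions onto the amalgamated subgroup made possible by evenness, and an appeal to Theorem \ref{thm2_1} --- but the decomposition you amalgamate over is genuinely different. The paper picks \emph{two} non-adjacent blocks $X,Y$ of $\Gamma/\PP$ (which exist because a triangle free graph on at least three vertices is not complete) and writes $A = A_{S\setminus X} \ast_{A_{S\setminus(X\cup Y)}} A_{S\setminus Y}$, with retractions killing $Y$ on one side and $X$ on the other; there the non-adjacency of $X$ and $Y$ is what guarantees that every defining relation lies in one of the two factors, and both factors have exactly $|\PP|-1$ blocks. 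You instead peel off a single block $X_0$ and amalgamate its star $A_{X_0\cup N}$ with $A_{S\setminus X_0}$ over the link $A_N$; for this amalgam the decomposition is automatic for \emph{any} block, and triangle-freeness is used only to produce a non-neighbour of $X_0$ so that the star is a proper union of blocks and the induction closes. Your verification of the retractions (even inter-block labels collapse to trivial relations when one generator dies; intra-block relations are either preserved verbatim or wholly trivialised, irrespective of parity) is exactly the point where evenness enters in the paper's version too, and the heredity of admissibility, evenness, triangle-freeness and hypothesis (a) under passing to unions of blocks is correctly observed. One cosmetic remark: your closing observation that $\{X_0\}\cup N$ spans a star graph is true but not needed --- being an induced subgraph of an even triangle free graph already suffices for the inductive hypothesis, and the argument also goes through when $N=\emptyset$, where the amalgam degenerates to a free product.
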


\begin{proof}
We argue by induction on the cardinality $|\PP|$ of $\PP$.
The case $|\PP| \le 2$ is covered by Lemma \ref{lem2_4}.
So, we can suppose that $|\PP| \ge 3$ plus the inductive hypothesis. 
Since $\Gamma/\PP$ is triangle free, there exist $X,Y \in \PP$ such that none of the elements of $X$ is connected to an element of $Y$.
We set $U_1 = S \setminus X$, $U_2 = S \setminus Y$, and $U_0 = S \setminus (X \cup Y)$.
We have $A = A_{U_1} *_{A_{U_0}} A_{U_2}$ and, by the inductive hypothesis, $A_{U_1}$ and $A_{U_2}$ are residually finite.
Since $\Gamma/\PP$ is even, the inclusion map $A_{U_0} \hookrightarrow A_{U_1}$ admits a retraction $\rho_1 : A_{U_1} \to A_{U_0}$ which sends $t$ to $1$ if $t \in Y$ and sends $t$ to $t$ if $t \in U_0$.
Similarly, the inclusion map $A_{U_0} \hookrightarrow A_{U_2}$ admits a retraction $\rho_2: A_{U_2} \to A_{U_0}$.
By Theorem \ref{thm2_1} it follows that $A$ is residually finite. 
\end{proof}

The following lemma ends the proof of Theorem \ref{thm1_1}.

\begin{lem}\label{lem2_6}
Let $\Gamma$ be a Coxeter graph, let $A = A_\Gamma$, and let $\PP$ be an admissible partition of $S$ such that
\begin{itemize}
\item[(a)]
the group $A_X$ is residually finite for all $X \in \PP$,
\item[(b)]
the Coxeter graph $\Gamma/\PP$ is a forest.
\end{itemize}
Then $A$ is residually finite.
\end{lem}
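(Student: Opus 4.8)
The plan is to argue by induction on the cardinality $|\PP|$, in the same spirit as the proof of Lemma~\ref{lem2_5}, with the base case $|\PP| \le 2$ supplied by Lemma~\ref{lem2_4}. So I would assume $|\PP| \ge 3$ together with the inductive hypothesis, and the goal is to manufacture a strictly smaller admissible partition $\PP'$ of the \emph{same} set $S$ which again satisfies hypotheses (a) and (b); the inductive hypothesis applied to $\PP'$ then gives that $A = A_\Gamma$ is residually finite. In other words, the whole proof reduces to peeling off one block from the forest $\Gamma/\PP$ while preserving both the forest structure of the quotient and the residual finiteness of every block.

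Since $\Gamma/\PP$ is a nonempty forest, it has a vertex $X \in \PP$ of degree at most $1$, and I would split into two cases. If $X$ is isolated in $\Gamma/\PP$, then no vertex of $X$ is joined by an edge to a vertex of $S \setminus X$, so $A = A_X \ast A_{S \setminus X}$. Here $A_X$ is residually finite by (a), while $A_{S \setminus X}$ is residually finite by the inductive hypothesis applied to the partition $\PP \setminus \{X\}$, whose quotient is the forest obtained from $\Gamma/\PP$ by deleting the isolated vertex $X$; a free product of two residually finite groups is residually finite (this is Theorem~\ref{thm2_1} with $L$ trivial), so $A$ is residually finite.

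The substantial case is when $X$ has degree exactly $1$. Let $Y \in \PP$ be its unique neighbour; by admissibility there is a single edge joining $X$ to $Y$, and $X$ is joined to no other block. I would then \emph{merge the leaf into its neighbour}, setting $\PP' = (\PP \setminus \{X, Y\}) \cup \{X \cup Y\}$, and check three things. First, $\PP'$ is admissible: the only edges between $X \cup Y$ and another block $Z$ come from $Y$ (since the leaf $X$ meets no block other than $Y$), so there is at most one. Second, $\Gamma/\PP'$ is exactly $\Gamma/\PP$ with the edge joining $X$ and $Y$ contracted; contracting the edge at a leaf keeps the graph a forest, now on $|\PP| - 1$ vertices. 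Third, condition (a) persists: the only new block is $X \cup Y$, and the full subgraph $\Gamma_{X \cup Y}$ carries the admissible two-block partition $\{X, Y\}$ with $A_X$ and $A_Y$ residually finite, so Lemma~\ref{lem2_4} yields that $A_{X \cup Y}$ is residually finite. Thus $\PP'$ satisfies (a) and (b) with $|\PP'| = |\PP| - 1$, and the inductive hypothesis finishes the argument.

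I expect the leaf-merging step to be the only delicate point. The things to get right are that contracting the edge at a leaf cannot create a double edge --- which holds because in a forest a leaf shares no common neighbour with its unique neighbour --- so that the enlarged quotient remains an admissible forest, and that the residual finiteness of the enlarged block $A_{X \cup Y}$ is exactly what the already-established two-block Lemma~\ref{lem2_4} provides. Everything else is bookkeeping on the quotient graph.
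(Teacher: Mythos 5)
Your proof is correct, but it takes a genuinely different route from the paper's. The paper also inducts on $|\PP|$ with Lemma~\ref{lem2_4} as base case and first reduces to the case where $\Omega=\Gamma/\PP$ is a tree, but it then works at a vertex $X$ of valence $\ge 2$ rather than at a leaf: choosing a neighbour $Y$ and the unique connecting pair $(s,t)$, it forms the two subsets $U$ (the blocks of the component of $\Omega_{\PP\setminus\{X\}}$ containing $Y$, together with $s$) and $V$ (the blocks of the component of $\Omega_{\PP\setminus\{Y\}}$ containing $X$), shows each is residually finite by the inductive hypothesis, checks that every connected component of $\Gamma_{S\setminus\{s\}}$ together with $s$ lands inside $U$ or $V$, and concludes via the amalgamation-over-$\Z$ argument of Lemma~\ref{lem2_3}. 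You instead never touch Lemma~\ref{lem2_3} in the inductive step: you coarsen the partition by absorbing a leaf $X$ into its unique neighbour $Y$, certify the merged block $A_{X\cup Y}$ by the two-block Lemma~\ref{lem2_4}, observe that admissibility and the forest property survive the contraction (your key point, correctly justified, being that a leaf contributes no edges to any block other than $Y$, so no double edge can appear), and drop $|\PP|$ by one. Your version is arguably cleaner: it isolates all the amalgamated-product work inside Lemmas~\ref{lem2_3} and~\ref{lem2_4} and reduces the forest case to the purely combinatorial fact that a forest collapses leaf by leaf, whereas the paper's argument must redo a splitting over $\langle s\rangle\simeq\Z$ at each stage and verify the slightly fiddly containment of the components of $\Gamma_{S\setminus\{s\}}$. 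Both proofs are complete and of comparable length; yours trades the paper's explicit group-theoretic decomposition for a reduction that makes the two-block lemma visibly the whole content of the forest case.
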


\begin{proof}
We argue by induction on $|\PP|$.
The case $|\PP| \le 2$ being proved in Lemma \ref{lem2_4}, we can assume that $|\PP| \ge 3$ plus the inductive hypothesis.
Set $\Omega = \Gamma/\PP$.
Let $\Omega_1, \dots, \Omega_\ell$ be the connected components of $\Omega$.
For $i \in \{1, \dots, \ell \}$ we denote by $\PP_i$ the set of vertices of $\Omega_i$ and we set $Y_i = \cup_{X \in \PP_i} X$ and $\Gamma_i = \Gamma_{Y_i}$.
The set $\PP_i$ is an admissible partition of $Y_i$ and $\Gamma_i/\PP_i = \Omega_i$ is a tree for all $i \in \{1, \dots, \ell \}$.
Moreover, we have $A = A_{Y_1} * \cdots * A_{Y_\ell}$, hence $A$ is residually finite if and only if $A_{Y_i}$ is residually finite for all $i \in \{1, \dots, \ell\}$.
So, we can assume that $\Omega = \Gamma/\PP$ is a tree. 

Since $|\PP| \ge 3$, $\Omega$ has a vertex $X$ of valence $\ge 2$.
Choose $Y \in \PP$ connected to $X$ by an edge of $\Omega$.
Let $s \in X$ and $t \in Y$ such that $s$ and $t$ are connected by an edge of $\Gamma$.
Recall that by definition $s$ and $t$ are unique.
Let $Q'$ be the connected component of $\Omega_{\PP \setminus \{ X\}}$ containing $Y$, let $\PP_Q'$ be the set of vertices of $Q'$, let $U' = \cup_{Z \in \PP_Q'} Z$, let $U=U' \cup \{s\}$, and let $\PP_Q = \PP_Q' \cup \{ \{ s\} \}$.
Observe that $\PP_Q$ is an admissible partition of $U$, that $A_Z$ is residually finite for all $Z \in \PP_Q$, that $\Gamma_U/\PP_Q$ is a tree, and that $|\PP_Q| < |\PP|$.
By the inductive hypothesis it follows that $A_U$ is residually finite.
Let $R$ be the connected component of $\Omega_{\PP \setminus \{ Y \}}$ containing $X$, let $\PP_R$ be the set of vertices of $R$, and let $V = \cup_{Z \in \PP_R} Z$.
Observe that $\PP_R$ is an admissible partition of $V$, that  $A_Z$ is residually finite for all $Z \in \PP_R$, that $\Gamma_V/\PP_R$ is a tree, and that $|\PP_R| < |\PP|$.
By the inductive hypothesis it follows that $A_V$ is residually finite.
Let $\Delta_1, \dots, \Delta_q$ be the connected components of $\Gamma_{S \setminus \{s\}}$.
Let $i \in \{1, \dots, q\}$.
Let $Z_i$ be the set of vertices of $\Delta_i$.
It is easily seen that either $Z_i \cup \{s \} \subset U$, or $Z_i \cup \{s\} \subset V$, hence, by the above, $A_{Z_i \cup \{ s\}}$ is residually finite. 
We conclude by Lemma \ref{lem2_3} that $A$ is residually finite.
\end{proof}

%%%%%

%%%%%%%%%%

\end{document}